\documentclass[12pt, a4paper]{amsart}
\usepackage[utf8]{inputenc}
\usepackage[T1]{fontenc}
\usepackage[english]{babel}
\usepackage{amsmath,amsfonts,amssymb,latexsym,amscd}
\usepackage{cite}
\theoremstyle{plain}
\newtheorem{theorem}{Theorem}
\newtheorem{proposition}{Proposition}
\newtheorem{corollary}{Corollary}
\theoremstyle{definition}
\newtheorem{definition}{Definition}

\oddsidemargin=5mm \evensidemargin=5mm \textwidth=15.7cm
\textheight=24cm \topmargin=-10mm \footskip=5mm

\usepackage{hyperref}

\begin{document}

\title{Bi-Equivariant Extensions Of  Maps} 

\author{Pavel S. Gevorgyan}

\address{Moscow State Pedagogical University}

\email{pgev@yandex.ru}

\subjclass[2010] {Primary: 54H15; secondary: 20M20.}

\keywords{Binary action, distributive binary $G$-space, bi-equivariant map, orbit space, cross section, structural map}

\begin{abstract}
The problem of bi-equivariant extension of continuous maps of binary $G$-spaces is considered. The concept of a structural map of distributive binary $G$-spaces is introduced, and a theorem on the bi-equivariant extension of structural maps is proven. A theorem on the bi-equivariant extension of continuous maps defined on the cross sections of a distributive binary $G$-space is also proven.
\end{abstract}

\maketitle


\section{Introduction}

The notion of a binary $G$-space or a group of binary transformations of a topological space was introduced in the paper \cite{Gev}. This concept in algebra was considered and studied in \cite{movsisyan}. When a group $G$ acts binarily on a topological space $X$, then there exists a homomorphism of the group $G$ into the group $C_2^*(X)$ of all invertible continuous binary operations of $X$ with the identity element $e(x,x')=x'$ and the composition of two binary operations $f,g\in C_2^*(X)$, defined by the formula 
$$(f\circ g)(x,x')=f(x,g(x,x')),$$ 
where $x,x'\in X$. Consequently, the elements of the group $G$ can be represented as invertible continuous binary operations of the space $X$.

The binary action $\alpha$ of the group $G$ on the space $X$ generates a continuous family of ordinary actions $\{\alpha_x, \ x\in X\}$ of the group $G$ on $X$. Binary $G$-spaces and bi-equivariant maps form a category, which is a natural extension of the category of $G$-spaces and equivariant maps.

When transferring and studying the basic concepts and results of the theory of $G$-spaces to the theory of binary $G$-spaces, natural difficulties and significant differences arise. For example, the orbits of points in a binary $G$-space may intersect, and therefore, orbit spaces cannot be defined in usual terms using minimal bi-invariant subsets. However, this can be done for the so-called distributive binary $G$-spaces. These and other questions of equivariant and bi-equivariant topology can be found in papers \cite{Bredon,Dieck,Gev2,Gev,Gev-3,Gev-Naz}.

This article is devoted to the problem of bi-equivariant extension of continuous maps defined on closed subsets of binary $G$-spaces. A sufficient condition for the bi-equivariant extension of a continuous map $f:A\to Y$, where $A\subset X$ is a closed subset and $X$ and $Y$ are binary $G$-spaces, is obtained (Theorem \ref{th-1}).

In a distributive binary $G$-space $X$, the question of bi-equivariant extension is considered for maps defined on the cross sections of the projection $\pi : X \to X|G$. Since the saturation of a cross section $C \subset X$ coincides with the space $X$ itself, the map $f:C \to Y$ can be bi-equivariantly extended uniquely to the whole space $X$ (Theorem \ref{th-4}, Corollary \ref{th-4}).


\section{Preliminaries} 

Let $X$ be a topological space and let $G$ be an arbitrary topological group. A \textit{binary action} of group $G$ on $X$ is a 
continuous map $\alpha :G\times X^2\to X$ such that
\begin{equation*}\label{eq(1)}
\alpha (gh, x_1,x_2)=\alpha(g, x_1, \alpha(h,x_1,x_2)), \quad \alpha (e, x_1,x_2)=x_2,
\end{equation*}
or
$$gh(x_1,x_2)=g(x_1, h(x_1,x_2)), \quad e(x_1,x_2)=x_2$$
for all $g,h\in G$ and $x_1,x_2 \in X$, where $e$ is the identity of $G$.

By a \textit{topological binary transformation group} or \textit{binary $G$-space} we mean a triple $(G,X,\alpha)$, where $\alpha$ is a binary action of group $G$ on $X$.

There are two natural binary actions of the topological group $G$ on itself given by the following formulas:
\begin{equation}\label{eq-GonGdistr}
g(g_1,g_2) = g_1gg_1^{-1}g_2
\end{equation}
and 
\begin{equation}\label{eq-GonG}
g(g_1,g_2) = g_1^{-1}gg_1g_2,
\end{equation}
where $g,g_1,g_2\in G$.

For any subsets $A$ and $B$ of a binary $G$-space $X$, and an arbitrary $K$ in $G$, we denote
$$K(A,B)=\{g(a,b); \quad g\in K, a\in A, b\in B\}.$$
If $G$ is a compact group, and $A$ is a closed subset of a binary $G$-space $X$, then $G(A, A)$ is closed in $X$ \cite[Theorem 2]{Gev-3}.

A subset $A\subset X$ is called \emph{bi-invariant} if $G(A,A) = A$ holds. A bi-invariant subset $A \subset X$ itself constitutes a binary $G$-space with the induced binary action and is called a \emph{binary $G$-subspace} of $X$. The intersection of bi-invariant sets of a binary $G$-space $X$ is bi-invariant. However, the union of bi-invariant sets, in general, is not bi-invariant.

The minimal bi-invariant subset containing the point $x \in X$ is called the \emph{orbit} of the point $x$ and is denoted by $[x]$. It is clear that $G(x,x)\subset [x]$. Therefore, if $G(x,x)$ is a bi-invariant set, then $G(x,x) = [x]$.

Unlike $G$-spaces, in binary $G$-spaces the orbits of points may intersect \cite[Example 3]{Gev-Naz}. Consequently, the concept of an orbit space cannot be directly extended to all binary G-spaces.

A binary action of the group $G$ on $X$ is called \textit{distributive} if the following condition is satisfied:
\begin{equation}\label{eq1-1}
g(h(x,x_1), h(x,x_2))=h(x,g(x_1, x_2))
\end{equation}
for any $x, x_1, x_2 \in X$ and $g, h\in G$.
In this case, $X$ is called a \textit{distributive} binary $G$-space.

The binary action \eqref{eq-GonGdistr} of a group $G$ on itself is distributive, while the binary action defined by the formula \eqref{eq-GonG} is not necessarily distributive, in general.

In a distributive binary $G$-space $X$, the set $G(x,x)$ is bi-invariant for all $x\in X$ \cite[Proposition 1]{Gev-3}, and any two orbits either are disjoint or coincide \cite[Proposition 6]{Gev-Naz}. Hence, a distributive binary $G$-space $X$ is partitioned by its orbits, and one can define the notion of the orbit space $X|G$ in usual terms. If $G$ is a compact topological group, and $X$ is a distributive binary $G$-space, then the orbit space $X | G$ is Hausdorff \cite[Theorem 5]{Gev-3}, and the projection $\pi:X\to X|G$ possesses important properties. The following theorem can be found in \cite{Gev-3}.

\begin{theorem}\label{th-2}
Let $G$ be a compact topological group, and let $X$ be a distributive binary $G$-space. Then the projection $\pi:X\to X|G$ is

(a) a closed map;

(b) a proper map.
\end{theorem}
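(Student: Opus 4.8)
The plan is to prove both parts by first describing the saturation of a set under $\pi$ explicitly, and then exploiting the compactness of $G$ through the classical fact that projection along a compact factor is a closed map.

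For part (a), recall that $\pi$ is closed precisely when, for every closed $A\subseteq X$, the saturation $\pi^{-1}(\pi(A))$ is closed in $X$, by definition of the quotient topology on $X|G$. Since $X$ is distributive, each orbit satisfies $[x]=G(x,x)$, and since distinct orbits are disjoint, a point $y$ lies in $\pi^{-1}(\pi(A))$ if and only if $[y]\cap A\neq\emptyset$, i.e.\ if and only if $g(y,y)\in A$ for some $g\in G$. The key step is to encode this description as the image of a closed set under a closed map. I would introduce the continuous map $\Phi:G\times X\to X$, $\Phi(g,x)=g(x,x)=\alpha(g,x,x)$, which is continuous because it is the composition of $\alpha$ with $\mathrm{id}_G$ times the diagonal $X\to X^2$. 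Then $\Phi^{-1}(A)$ is closed, and writing $p:G\times X\to X$ for the projection onto the second factor, one checks the set equality
\begin{equation*}
\pi^{-1}(\pi(A))=p\bigl(\Phi^{-1}(A)\bigr)=\{\,x\in X:\ \exists\,g\in G,\ g(x,x)\in A\,\}.
\end{equation*}
Because $G$ is compact, $p$ is a closed map (projection along a compact factor), so $p\bigl(\Phi^{-1}(A)\bigr)$ is closed, which is exactly what part (a) requires.

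For part (b), I would combine part (a) with the compactness of the fibers. The fiber of $\pi$ over the orbit of $x$ is the orbit itself, $\pi^{-1}(\pi(x))=[x]=G(x,x)=\Phi(G\times\{x\})$, which is the continuous image of the compact group $G$ and hence compact. Thus $\pi$ is a continuous closed surjection with compact point-inverses, i.e.\ a perfect map. It then remains to invoke (or reprove) the standard fact that a perfect map pulls compact sets back to compact sets: for compact $K\subseteq X|G$, cover $\pi^{-1}(K)$ by open sets, use compactness of each fiber to extract a finite subfamily whose union $V_y$ contains $\pi^{-1}(y)$, use closedness of $\pi$ to produce an open neighborhood $W_y$ of $y$ with $\pi^{-1}(W_y)\subseteq V_y$, and finally pass to a finite subcover of $K$. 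This yields compactness of $\pi^{-1}(K)$ and establishes properness.

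The main obstacle I anticipate is the bookkeeping in part (a): one must resist the temptation to identify the saturation with $G(A,A)$, which is generally strictly larger, and instead recognize that only the diagonal values $g(a,a)$ occur. Getting the map $\Phi$ and the set equality above right is the crux; once the saturation is expressed as $p\bigl(\Phi^{-1}(A)\bigr)$, the compactness of $G$ does all the remaining work, and part (b) follows formally from (a) together with the compactness of the orbits.
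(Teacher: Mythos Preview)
Your argument is sound, but note that the paper does not actually prove this theorem: it is quoted from \cite{Gev-3} with no proof supplied here, so there is nothing in the present paper to compare your approach against.

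That said, your proof stands on its own. The key identity $\pi^{-1}(\pi(A))=p\bigl(\Phi^{-1}(A)\bigr)$ with $\Phi(g,x)=g(x,x)$ relies on exactly the two facts the paper records about distributive binary $G$-spaces---that $[x]=G(x,x)$ and that distinct orbits are disjoint---and the closedness of the projection $p:G\times X\to X$ along the compact factor $G$ (the Kuratowski--Mr\'owka theorem) finishes part~(a). Part~(b) then reduces, as you say, to the general fact that a closed continuous surjection with compact fibers is proper, and your sketch of that argument is the standard one. Your closing caveat about not confusing the $\pi$-saturation $\bigcup_{a\in A}G(a,a)$ with $G(A,A)$ is apt: the latter can be strictly larger when $A$ meets more than one orbit.
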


A continuous map $f:X\to Y$ between binary $G$-spaces $(G,X,\alpha)$ and $(G,Y,\beta)$ is called a \textit{bi-equivariant map} provided
$$f(\alpha(g,x_1,x_2))=\beta(g,f(x_1),f(x_2))$$
or
$$f(g(x_1,x_2))=g(f(x_1),f(x_2))$$
for all $g\in G$ and $x_1,x_2 \in X$.

A bi-equivariant map \(f:X\to Y\), which is also a homeomorphism, is called an \textit{equivalence} of binary \(G\)-spaces, or a \textit{bi-equimorphism}. All binary \(G\)-spaces and bi-equivariant maps form a category.

For more details on the concepts, definitions, and results presented above, one can refer to the works \cite{Gev0,Gev2,Gev1,Gev,Gev-3,Gev-Iliadis,Gev-Quitzeh,Gev-Naz}.


\section{Structural maps and their bi-equivariant extension}

Let $X$ be a binary $G$-space, and let $A$ be any subset of $X$. The minimal bi-invariant subset $\widetilde{A}$ of $X$ which contains  a set $A$ is called the \textit{saturation} of $A$. 

Let us recursively define the sets $A^n$, $n=1,2, \ldots \,$, as follows: 
$$A^1 = G(A,A), \quad  A^2 = G(A^1,A^1), \quad \ldots \,, \quad A^n = G(A^{n-1},A^{n-1}), \quad \ldots$$ 

\begin{proposition}
		For any subset $A\subset X$ of a binary $G$-space $X$,
		\[
		\widetilde{A}= \bigcup\limits_{n=1}^{\infty} A^n,
		\] 
		where $\widetilde{A}$ is the saturation of $A$.
\end{proposition}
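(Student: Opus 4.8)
The plan is to prove the two inclusions $\bigcup_{n=1}^{\infty} A^n \subseteq \widetilde{A}$ and $\widetilde{A} \subseteq \bigcup_{n=1}^{\infty} A^n$ separately, writing $B = \bigcup_{n=1}^{\infty} A^n$ for brevity. Recall first that $\widetilde{A}$ is well defined: since the intersection of bi-invariant sets is bi-invariant and $X$ is itself bi-invariant, $\widetilde{A}$ may be taken as the intersection of all bi-invariant supersets of $A$, and in particular it is the minimal such set. The first inclusion will then follow from this minimality once I know each $A^n$ lies inside $\widetilde{A}$, while the second follows once I verify that $B$ is itself a bi-invariant set containing $A$.

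For $B \subseteq \widetilde{A}$ I would argue by induction on $n$ that $A^n \subseteq \widetilde{A}$. The base case $A \subseteq \widetilde{A}$ holds by definition of the saturation. For the inductive step, I use that $G(\cdot,\cdot)$ is monotone in both arguments, so from $A^{n-1} \subseteq \widetilde{A}$ one gets $A^n = G(A^{n-1},A^{n-1}) \subseteq G(\widetilde{A},\widetilde{A}) = \widetilde{A}$, where the last equality is precisely the bi-invariance of $\widetilde{A}$. Taking the union over $n$ yields $B \subseteq \widetilde{A}$.

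The reverse inclusion rests on showing $B$ is bi-invariant and contains $A$, after which minimality of $\widetilde{A}$ forces $\widetilde{A} \subseteq B$. The containment $A \subseteq B$ is immediate since $A \subseteq A^1$: each $a \in A$ satisfies $a = e(a,a) \in G(A,A) = A^1$ by the identity axiom $e(x_1,x_2) = x_2$. That same axiom gives the crucial fact that the chain is increasing, $A^{n-1} \subseteq A^n$, since any $x \in A^{n-1}$ equals $e(x,x) \in G(A^{n-1},A^{n-1}) = A^n$. To prove $G(B,B) = B$: the inclusion $B \subseteq G(B,B)$ is again this identity observation, while for $G(B,B) \subseteq B$ I take an arbitrary element $g(b_1,b_2)$ with $b_1,b_2 \in B$, choose indices with $b_1 \in A^i$ and $b_2 \in A^j$, and set $n = \max(i,j)$; by monotonicity of the chain both $b_1,b_2 \in A^n$, so $g(b_1,b_2) \in G(A^n,A^n) = A^{n+1} \subseteq B$.

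The main obstacle, and the step that genuinely uses the structure of the construction, is the verification of $G(B,B) \subseteq B$. A single element $g(b_1,b_2)$ draws on two points of $B$ that a priori sit at different levels $A^i$ and $A^j$, and one cannot conclude membership in any single $A^{k+1}$ without first knowing the levels are comparable. This is exactly what the monotonicity $A^{n-1} \subseteq A^n$ supplies, and that monotonicity is in turn a consequence of the identity axiom $e(x_1,x_2) = x_2$. Once the chain is known to be increasing, the union $B$ closes under the binary action, and combining the two inclusions gives $\widetilde{A} = B$.
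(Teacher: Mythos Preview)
Your proof is correct and follows exactly the same approach as the paper: establish the increasing chain $A\subset A^1\subset A^2\subset\cdots$ via the identity axiom $e(x_1,x_2)=x_2$, and check that the union is bi-invariant. The paper's proof merely states these two facts without justification, so your version is a fully detailed rendering of the same argument (including the easy reverse inclusion $B\subseteq\widetilde{A}$, which the paper leaves implicit).
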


\begin{proof}
It suffices to note that
$$A\subset A^1 \subset A^2 \subset \cdots \subset A^n \subset \cdots $$
and that $\bigcup\limits_{n=1}^{\infty} A^n\subset X$  is a bi-invariant subset. 
\end{proof}

Now, let us denote an element $x=g_1(a_1,a_2) \in A^1$ by $[g_1;a_1,a_2]$: 
$$x= [g_1;a_1,a_2].$$

Note that any element $x\in A^2$ has a form $g_1(g_2(a_1,a_2), g_3(a_3,a_4))$
which we denote by $[g_1,g_2,g_3;a_1,a_2, a_3, a_4]$:
$$x=[g_1,g_2,g_3;a_1,a_2, a_3, a_4].$$

Similarly, any element $x\in A^n$ is defined by a collection of some elements $g_1, \ldots , g_{2^{n}-1} \in G$ and $a_1, \ldots \,, a_{2^n} \in A$: 
$$x=[g_1, \ldots \,, g_{2^{n}-1};a_1, \ldots \,, a_{2^n}].$$

\begin{definition}
Let $X$ and $Y$ be binary $G$-spaces, and let $A$ be a subset of $X$. We say that a continuous map $f:A\to Y$ is a \textit{structural map} if the following conditions hold:

\vspace{3mm}

({\bf SM1})  If $a_1, a_2$ and $g(a_1,a_2) \in A$, $g\in G$, then $f(g(a_1,a_2))=g(f(a_1),f(a_2))$, 

\vspace{2mm}

({\bf SM2})  $[g_1, \ldots \,, g_{2^{n}-1};a_1, \ldots \,, a_{2^n}] = [g'_1, \ldots \,, g'_{2^{m}-1}; a'_1, \ldots \,, a'_{2^m}]$  
implies
$$[g_1, \ldots \,, g_{2^{n}-1}; f(a_1), \ldots \,, f(a_{2^n})] = [g'_1, \ldots \,, g'_{2^{m}-1}; f(a'_1), \ldots \,, f(a'_{2^m})], \vspace{1mm}$$
where $g_1, \ldots g_{2^{n}-1},  g_1', \ldots g'_{2^{m}-1} \in G$, $a_1, \ldots \,, a_{2^n}, a'_1, \ldots \,, a'_{2^m}\in A$, $n,m \in N$.
\end{definition}

It is easy to see that if $A \subset X$ is a bi-invariant subset, then any bi-equivariant map $f:A\to Y$ is a structural map.

\begin{theorem}\label{th-1}
Let $G$ be a compact group, $X$ and $Y$ be binary $G$-spaces and let $A$ be a closed subset of $X$. Then every continuous structural map $f:A\to Y$ can be extended uniquely to a continuous bi-equivariant map $\widetilde{f} : \widetilde{A} \to Y$ where $\widetilde{A}\subset X$ is the  saturation of $A$.
\end{theorem}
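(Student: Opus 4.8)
The plan is to define the extension $\widetilde f$ by the only formula compatible with bi-equivariance, verify it is well defined using (SM2), check bi-equivariance by inspection, and then concentrate the real effort on continuity. Concretely, for $x\in\widetilde A$ choose $n$ with $x\in A^n$ and a representation $x=[g_1,\dots,g_{2^n-1};a_1,\dots,a_{2^n}]$, and set
\[
\widetilde f(x)=[g_1,\dots,g_{2^n-1};f(a_1),\dots,f(a_{2^n})].
\]
Condition (SM2) is precisely the assertion that this value is independent of the chosen representation, and since (SM2) is stated for representations of possibly different lengths $n,m$, it also forces consistency across the inclusions $A^{n-1}\subset A^n$. On $A$ the formula reproduces $f$: writing $a=e(a,a)=[e;a,a]$ and using $\beta(e,\cdot,\cdot)=\mathrm{pr}_2$ gives $\widetilde f(a)=f(a)$, while (SM1) guarantees agreement with $f$ whenever an element of $A$ also admits a nontrivial representation in $A^1$. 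Thus $\widetilde f$ is a well-defined extension of $f$.

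Bi-equivariance is then immediate from the bracket calculus. Given $x,x'\in\widetilde A$ and $g\in G$, lift both to a common level $A^N$, so that $g(x,x')\in A^{N+1}$ has the bracket obtained by prepending $g$ to the concatenation of the brackets of $x$ and $x'$; applying $f$ coordinatewise and reading the definition backwards yields $\widetilde f(g(x,x'))=g(\widetilde f(x),\widetilde f(x'))$. Uniqueness is equally formal: any continuous bi-equivariant $F:\widetilde A\to Y$ extending $f$ must satisfy the defining relation on each bracket, so by induction on the level it is forced to coincide with $\widetilde f$ on every $A^n$, hence on $\widetilde A$.

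The substantive part is continuity, which I would prove by induction on $n$, showing each restriction $\widetilde f|_{A^n}$ is continuous. By the cited closedness theorem ($G$ compact, $A$ closed $\Rightarrow G(A,A)$ closed) every $A^n$ is closed in $X$; the base case $\widetilde f|_A=f$ is the hypothesis. For the inductive step consider the continuous evaluation map
\[
P\colon G\times A^{n-1}\times A^{n-1}\to A^n,\qquad P(g,u,v)=g(u,v).
\]
Using the already-established pointwise bi-equivariance together with the inductive hypothesis, the composite $\widetilde f\circ P$ equals $(g,u,v)\mapsto\beta\bigl(g,\widetilde f|_{A^{n-1}}(u),\widetilde f|_{A^{n-1}}(v)\bigr)$, which is continuous. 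Hence, once $P$ is known to be a quotient map, $\widetilde f|_{A^n}$ is continuous; continuity on $\widetilde A=\bigcup_n A^n$ then follows from the fact that the $A^n$ form an increasing closed exhaustion, provided the union carries the colimit topology with respect to this exhaustion (equivalently, that each convergent net eventually lies in a single $A^n$).

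The main obstacle is exactly the claim that $P$ is a quotient, i.e.\ a closed, map, and this is where compactness of $G$ must do all the work. For an ordinary $G$-space the analogous map $G\times A\to GA$ is closed because a point is recoverable as $a=g^{-1}y$; in the binary setting the inverse operation only gives $v=g^{-1}(u,y)$, recovering the second argument but not the first, so the usual compactness-plus-recovery argument does not close up and must be supplemented. I would therefore isolate, as the key lemma, that for compact $G$ the map $P$ onto the closed set $A^n$ is closed---morally the map-level refinement of the cited theorem that $G(A,A)$ is closed. Granting this lemma the induction runs and the proof concludes, and I expect the genuine difficulty of the whole theorem to be concentrated in this closedness, with everything else amounting to bookkeeping with the bracket notation.
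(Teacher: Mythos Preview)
Your construction, well-definedness via (SM2), and bi-equivariance check coincide with the paper's; the only cosmetic difference is that the paper verifies bi-equivariance by choosing an arbitrary bracket representation $[g''_1,\dots,g''_{2^k-1};a''_1,\dots,a''_{2^k}]$ for $g(x,x')$ and invoking (SM2), which amounts to your prepend-and-concatenate description. The real divergence is on continuity: the paper disposes of it in a single sentence---``the continuity of $\widetilde f$ follows from the closedness of the set $A$, continuity of $f$ and the binary actions of the compact group $G$ on $X$ and $Y$''---and never engages with the quotient-map property of $P\colon G\times A^{n-1}\times A^{n-1}\to A^n$ or with whether $\widetilde A=\bigcup_n A^n$ carries the colimit topology of the closed exhaustion. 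So your proposal matches the paper's approach and is strictly more scrupulous about continuity; the obstacles you isolate are genuine, but the published proof simply does not address them.
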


\begin{proof}
Let $f:A\to Y$ be a structural map. 
Consider an arbitrary element $x \in \widetilde{A}$ and assume that $x=[g_1, \ldots \,, g_{2^{n}-1};a_1, \ldots \,, a_{2^n}]$ for some $g_1, \ldots \,, g_{2^{n}-1}\in G$, $a_1, \ldots \,, a_{2^n}\in A$ and $n\in \mathbf{N}$. Now, let's define the only possible bi-equivariant extension $\widetilde{f} : \widetilde{A} \to Y$ of the map $f:A\to Y$ by the formula:
$$\widetilde{f}(x) = [g_1, \ldots \,, g_{2^{n}-1}; f(a_1), \ldots \,, f(a_{2^n})].$$
This definition is correct due to conditions ({\bf SM1}) and ({\bf SM2}). 

It is easy to note that the map $\widetilde{f} : \widetilde{A} \to Y$ is an extension of the map $f:A\to Y$. The continuity of $\widetilde{f}$ follows from the closedness of the set $A$, continuity of $f$ and the binary actions of the compact group $G$ on $X$ and $Y$.

What remains is to prove the bi-equivariance of the map $\widetilde{f}$. Let  
$$x=[g_1, \ldots \,, g_{2^{n}-1};a_1, \ldots \,, a_{2^n}] \quad \text{and} \quad x'=[g'_1, \ldots \,, g'_{2^{m}-1};a'_1, \ldots \,, a'_{2^m}]$$
$g_1, \ldots \,, g_{2^{n}-1}, g'_1, \ldots \,, g'_{2^{m}-1} \in G$, $a_1, \ldots \,, a_{2^n}, a'_1, \ldots \,, a'_{2^m} \in A$ and $n, m\in \mathbf{N}$, be two arbitrary elements of the saturation $\widetilde{A}$ of the subspace $A\subset X$, and let $g\in G$ be an any element of the group $G$. Assume that
$$g(x,x')=[g''_1, \ldots \,, g''_{2^{k}-1}; a''_1, \ldots \,, a''_{2^k}]$$
or
$$g([g_1, \ldots \,, g_{2^{n}-1};a_1, \ldots \,, a_{2^n}],[g'_1, \ldots \,, g'_{2^{m}-1};a'_1, \ldots \,, a'_{2^m}])=[g''_1, \ldots \,, g''_{2^{k}-1}; a''_1, \ldots \,, a''_{2^k}],$$
where $g''_1, \ldots \,, g''_{2^{k}-1}\in G$, $a''_1, \ldots \,, a''_{2^k}\in A$ and $k\in \mathbf{N}$.
From this equality, due to ({\bf SM2}), it follows that 
\begin{multline*}
g([g_1, \ldots \,, g_{2^{n}-1};f(a_1), \ldots \,, f(a_{2^n})], [g'_1, \ldots \,, g'_{2^{m}-1}; f(a'_1), \ldots \,, f(a'_{2^m})])= \\
= [g''_1, \ldots \,, g''_{2^{k}-1};f(a''_1), \ldots \,, f(a''_{2^k})].
\end{multline*}
Considering the last equalities, we obtain:
\begin{multline*}
\widetilde{f}(g(x,x')= \widetilde{f}([g''_1, \ldots \,, g''_{2^{k}-1}; a''_1, \ldots \,, a''_{2^k}])= [g''_1, \ldots \,, g''_{2^{k}-1};f(a''_1), \ldots \,, f(a''_{2^k})]= \\
= g([g_1, \ldots \,, g_{2^{n}-1};f(a_1), \ldots \,, f(a_{2^n})], [g'_1, \ldots \,, g'_{2^{m}-1}; f(a'_1), \ldots \,, f(a'_{2^m})])
= \\
=g(\widetilde{f}(x), \widetilde{f}(x')).
\end{multline*}
\end{proof}


\section{Cross sections and bi-equivariant extension of maps}

Let $X$ be a distributive binary $G$-space, $X|G$ be its orbit space, and $\pi: X \rightarrow X|G$ be the projection on the orbit space. 

A continuous map $\sigma:X|G \to X$ is called a {\em cross section} for $\pi:X\to X|G$ if $\pi\sigma$ is the identity on $X|G$, i.e., $\pi\sigma = 1_{X|G}$.

\begin{proposition}\label{prop-2}
Let $G$ be a compact group, $X$ a distributive binary $G$-space, and  $\sigma : X|G\to X$ a section of the projection $\pi :X\to X|G$. Then, the image of the section $\sigma(X|G)$ is a closed subset of $X$.
\end{proposition}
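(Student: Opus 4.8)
The plan is to realize the cross section $C = \sigma(X|G)$ as the fixed-point set of a retraction of $X$ onto itself, and then to invoke the standard fact that such a set is closed in a Hausdorff space. First I would set $\Phi = \sigma \circ \pi : X \to X$. This map is continuous, being a composition of continuous maps, and it is idempotent: using $\pi\sigma = 1_{X|G}$ one computes
$$\Phi \circ \Phi = \sigma\pi\sigma\pi = \sigma(\pi\sigma)\pi = \sigma\pi = \Phi.$$
Thus $\Phi$ is a retraction of $X$ onto its image. Since $\pi$ is surjective, the image of $\Phi$ is exactly $\sigma(X|G) = C$, and for every $c = \sigma(b) \in C$ we have $\Phi(c) = \sigma\pi\sigma(b) = \sigma(b) = c$, so $\Phi$ restricts to the identity on $C$. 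In particular
$$C = \{\, x \in X : \Phi(x) = x \,\}.$$

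Next I would identify this fixed-point set as an equalizer and use the Hausdorff diagonal. Consider the continuous map $x \mapsto (x, \Phi(x))$ from $X$ to $X \times X$. Then $C$ is precisely the preimage of the diagonal $\Delta_X = \{\,(x,x) : x \in X\,\}$ under this map. If $X$ is Hausdorff, $\Delta_X$ is closed in $X \times X$, and therefore $C = \sigma(X|G)$ is closed in $X$, as required.

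The step I expect to be the crux is the separation hypothesis: the whole argument rests on $\Delta_X$ being closed, i.e., on $X$ being Hausdorff. This is where the hypotheses ``$G$ compact, $X$ distributive'' do their work — they are exactly the assumptions under which the theory guarantees good separation (the orbit space $X|G$ is Hausdorff, and by Theorem \ref{th-2} the projection $\pi$ is closed and proper). Concretely, the only thing one must rule out is that some net $c_\lambda = \sigma(\pi(c_\lambda)) \in C$ converges to a point $x_0 \notin C$; since then $c_\lambda = \sigma(\pi(c_\lambda)) \to \sigma(\pi(x_0)) = \Phi(x_0)$ by continuity of $\Phi$, uniqueness of limits in a Hausdorff $X$ forces $x_0 = \Phi(x_0)\in C$. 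I would therefore make the Hausdorffness of $X$ explicit, emphasising that closedness of $\pi$ alone cannot separate two distinct points lying in a common fibre of $\pi$, so the diagonal argument (equivalently, Hausdorffness of $X$) is the genuinely essential ingredient rather than a mere technicality.
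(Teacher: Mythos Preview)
Your argument is correct and is essentially the same as the paper's. The paper also sets (implicitly) $\Phi=\sigma\pi$, notes that $x_0\notin\sigma(X|G)$ iff $x_0\neq\sigma\pi(x_0)$, and then separates $x_0$ from $\sigma\pi(x_0)$ by disjoint open sets to build an open neighbourhood of $x_0$ missing $\sigma(X|G)$; this is exactly the unwound, point-set version of your ``fixed-point set equals preimage of the diagonal'' argument.

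One remark on your final paragraph: the hypotheses ``$G$ compact'' and ``$X$ distributive'' do not themselves produce the Hausdorffness of $X$; they yield Hausdorffness of the orbit space $X|G$ and closedness/properness of $\pi$, but neither of those helps separate two distinct points in the \emph{same} orbit, which is what you need. The paper, like you, simply uses Hausdorffness of $X$ as a standing (tacit) assumption when it chooses disjoint neighbourhoods of $x_0$ and $\sigma\pi(x_0)$. So your instinct to make this explicit is right, but the credit for it belongs to a background convention on spaces rather than to the compactness or distributivity hypotheses.
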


\begin{proof}
Let's prove that the complement $X \setminus \sigma(X|G)$ is an open set.
Let $x_0 \in X \setminus \sigma(X|G)$ be an arbitrary point. This means that the point $y_0 = \sigma\pi(x_0)$ is different from $x_0$: $y_0 \neq x_0 $. Let $U_0$ and $V_0$ be disjoint neighborhoods of the points $x_0$ and $y_0$ respectively: $U_0 \cap V_0 = \emptyset$.

Since $\sigma\pi : X \to X$ is a continuous map, then $(\sigma\pi)^{-1}(V_0)$ is an open neighborhood of the point $x_0$. Let's denote $W_0 = U_0 \cap (\sigma\pi)^{-1}(V_0)$. This set is an open neighborhood of the point $x_0$. It remains to note that $W_0 \subset X \setminus \sigma(X|G)$, i.e., for any $x \in W_0$, $x \neq \sigma\pi(x)$ holds true. Indeed, since $x \in W_0 \subset (\sigma\pi)^{-1}(V_0)$, then $\sigma\pi(x) \in \sigma\pi(\sigma\pi)^{-1}(V_0) = V_0$. Therefore, the point $\sigma\pi(x)$ is distinct from $x$, since $x \in W_0 \subset U_0$, and $U_0$ and $V_0$ are disjoint.
\end{proof}

Every closed subset of $X$  touching each orbit in exactly one point defines a cross section of $\pi:X\to X|G$. More precisely, the next proposition is true.

\begin{proposition}\label{prop-3}
Let $G$ be a compact group, $X$ be a distributive binary $G$-space, and $A$ be a closed subset of the space $X$ that intersects with each orbit $G(x,x)$ at exactly one point. Then the map $\sigma: X|G \rightarrow X$, defined by the formula $\sigma(x^*) = A \cap G(x,x)$, is a cross section of the projection $\pi: X \rightarrow X|G$.
\end{proposition}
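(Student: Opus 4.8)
The plan is to identify $\sigma$ with the inverse of the restriction $\pi|_A : A \to X|G$ and then to prove that this restriction is a homeomorphism; once that is done, continuity of $\sigma$ follows immediately, and the cross-section identity $\pi\sigma = 1_{X|G}$ is a direct computation.

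First I would verify that $\pi|_A$ is a continuous bijection. Surjectivity holds because $A$ meets every orbit $G(x,x)$, so every point $x^\ast \in X|G$ lies in the image. Injectivity holds because if $a_1,a_2 \in A$ satisfy $\pi(a_1)=\pi(a_2)$, then $a_1$ and $a_2$ lie in one and the same orbit; since $A$ meets that orbit in exactly one point, $a_1=a_2$. Continuity is automatic, being a restriction of the continuous projection $\pi$. Recall here that in a distributive binary $G$-space one has $G(x,x)=[x]$, so $\pi(x)=G(x,x)$, and an element $x^\ast\in X|G$ is precisely an orbit $G(x,x)$; thus $G(x,x)$ is determined by $x^\ast$ alone and the expression $A\cap G(x,x)$ does not depend on the chosen representative $x$.

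Next I would check that $\sigma$ coincides with $(\pi|_A)^{-1}$. For $x^\ast=\pi(x)=G(x,x)$, the unique $\pi|_A$-preimage of $x^\ast$ is the single point of $A\cap G(x,x)$, which is exactly $\sigma(x^\ast)$. This simultaneously shows that $\sigma$ is well-defined (single-valued) and establishes the cross-section property: writing $a$ for the point $A\cap G(x,x)$, we get $\pi(\sigma(x^\ast))=\pi(a)=G(a,a)=G(x,x)=x^\ast$, where the middle equality uses that $a$ lies in the orbit $G(x,x)$ and that in a distributive binary $G$-space two orbits either coincide or are disjoint.

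The essential step, and the main obstacle, is the continuity of $\sigma$, which is equivalent to showing that $\pi|_A$ is a closed map. Here I would combine the closedness of $A$ with Theorem \ref{th-2}(a). For any closed $F\subset A$, closedness of $A$ in $X$ gives that $F$ is closed in $X$; then $\pi(F)$ is closed in $X|G$ because $\pi$ is a closed map, and $\pi|_A(F)=\pi(F)$. Hence $\pi|_A$ is a continuous closed bijection, therefore a homeomorphism, and $\sigma=(\pi|_A)^{-1}$ is continuous. Together with the identity $\pi\sigma=1_{X|G}$ established above, this shows that $\sigma$ is a cross section of $\pi:X\to X|G$.
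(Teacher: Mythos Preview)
Your proof is correct and takes essentially the same approach as the paper: both arguments reduce continuity of $\sigma$ to the closedness of $\pi$ from Theorem~\ref{th-2}(a), noting that for closed $C\subset A$ one has $\sigma^{-1}(C)=\pi(C)$ (equivalently, $\pi|_A$ is a closed bijection onto $X|G$). The paper's version is simply a terser rendition of your homeomorphism argument, and it dispatches $\pi\sigma=1_{X|G}$ in one line as immediate from the definition, whereas you spell out the orbit-coincidence step explicitly.
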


\begin{proof}
Let $C$ be an arbitrary closed subset of $A$. By Theorem \ref{th-2}, the set $\sigma^{-1}(C) = \pi(C)$ is closed in $X|G$. Therefore, the map $\sigma: X|G \rightarrow X$ is continuous. 

The condition $\pi\sigma = 1_{X|G}$ immediately follows from the definition of $\sigma$.
\end{proof}

Propositions \ref{prop-2} and \ref{prop-3} demonstrate that there exists a bijective correspondence between the cross sections and the closed sets of $X$ that intersect with each orbit at exactly one point. Based on this, by <<cross section>> we will also understand a closed set that is the image of some cross section.

\begin{theorem}\label{th-4}
Let $G$ be a compact topological group, $X$ and $Y$ distributive binary $G$-spaces, and $A$ a section of the projection $\pi:X\to X|G$. Suppose $f:A\to Y$ is a continuous map such that 
\begin{multline*}
(*) \quad g(a,a)=h(k(a',a'),s(a'',a'')) \ 
implies \\
 g(f(a),f(a))=h(k(f(a'),f(a')),s(f(a''),f(a''))),
\end{multline*}
where $a,a',a''\in A$ and $g,h,k,s\in G$. Then $f$ has a unique continuous bi-equivariant extension $\widetilde{f} : X \to Y$.
\end{theorem}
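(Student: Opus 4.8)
The plan is to reduce everything to the diagonal structure forced by the cross-section hypothesis and then to read condition $(*)$ as exactly the compatibility needed for bi-equivariance. First I would record the basic geometric fact: in a distributive binary $G$-space every orbit has the form $G(x,x)$, so the hypothesis that $A$ meets each orbit in a single point says that for every $x\in X$ there is a \emph{unique} $a\in A$ with $x\in G(a,a)$; equivalently $x=g(a,a)$ for some (in general non-unique) $g\in G$. In particular the continuous map $\mu:G\times A\to X$, $\mu(g,a)=g(a,a)$, is surjective, and $\widetilde A=X$. This is the precise meaning of the remark in the introduction that the saturation of a cross section is all of $X$; note that here every point already admits a \emph{diagonal} representation over $A$, so the general expansions $[g_1,\ldots;a_1,\ldots]$ of Section 3 collapse to this simple form.

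Next I would define the extension by $\widetilde f(x)=g(f(a),f(a))$ whenever $x=g(a,a)$ with $a\in A$, and check this is legitimate. Since $a$ is pinned down by the orbit $G(x,x)$, the only ambiguity lies in $g$. If $g(a,a)=g'(a,a)$, then I apply condition $(*)$ with $h=g'$, $k=s=e$ and $a'=a''=a$, so that the right-hand side $h(k(a',a'),s(a'',a''))$ reduces to $g'(a,a)$; condition $(*)$ then yields $g(f(a),f(a))=g'(f(a),f(a))$, since $e(y,y)=y$. Hence $\widetilde f$ is well defined, and taking $g=e$ gives $\widetilde f|_A=f$.

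Bi-equivariance is where $(*)$ is used in full. Given $x=p(a,a)$, $x'=q(a',a')$ and $g\in G$, the element $g(x,x')=g(p(a,a),q(a',a'))$ again lies in some orbit $G(b,b)$ with $b\in A$, so $g(x,x')=r(b,b)$. Condition $(*)$, applied to the identity $r(b,b)=g(p(a,a),q(a',a'))$ (which matches its left/right pattern with $g_{*}=r$, $h=g$, $k=p$, $s=q$), gives $r(f(b),f(b))=g(p(f(a),f(a)),q(f(a'),f(a')))$, and this is precisely $\widetilde f(g(x,x'))=g(\widetilde f(x),\widetilde f(x'))$. Uniqueness is then immediate: any bi-equivariant extension $F$ must satisfy $F(g(a,a))=g(F(a),F(a))=g(f(a),f(a))$, so it coincides with $\widetilde f$ on all of $X$.

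The hard part will be continuity of $\widetilde f$, and I would obtain it by showing that $\mu$ is a quotient map; this is the only genuinely topological step, the algebraic content being carried entirely by $(*)$. Here compactness of $G$ and Theorem \ref{th-2} enter. For compact $K\subset X$, every point $(g,a)\in\mu^{-1}(K)$ has its $A$-coordinate forced into $A\cap\pi^{-1}(\pi(K))$, which is compact because $\pi$ is proper (Theorem \ref{th-2}(b)), $\pi(K)$ is compact, and $A$ is closed; since $g$ ranges over the compact group $G$, the set $\mu^{-1}(K)$ is a closed subset of a compact set, hence compact. Thus $\mu$ is proper, and therefore closed, hence a quotient map, exactly in the spirit of the continuity remark in Theorem \ref{th-1}. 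Finally, the continuous map $\psi:G\times A\to Y$, $\psi(g,a)=g(f(a),f(a))$, is constant on the fibers of $\mu$ (this is just well-definedness) and satisfies $\widetilde f\circ\mu=\psi$, so continuity of $\widetilde f$ follows from the universal property of the quotient.
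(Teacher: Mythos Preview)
Your proof is correct and follows essentially the same route as the paper: define $\widetilde f(x)=g(f(a),f(a))$ for $x=g(a,a)$, use $(*)$ for well-definedness and bi-equivariance, and infer uniqueness from the forced form of any bi-equivariant extension. The only substantive difference is that you supply a genuine argument for continuity (showing $\mu:G\times A\to X$ is proper, hence a quotient map), whereas the paper dispatches this step in a single sentence appealing to continuity of $f$, the section, and the actions; your version is more careful here but not a different method.
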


\begin{proof}
Let \(x \in X\) be an arbitrary point. Suppose that \(x\) belongs to the orbit of the point \(a \in A\). This means that there exists an element \(g \in G\) such that \(x = g(a,a)\). Now, let's define the unique possible bi-equivariant extension \(\tilde{f}: X \rightarrow Y\) of the map \(f: A \rightarrow Y\) by the formula
$$\tilde{f}(x) = g(f(a),f(a)).$$

Note that the map $\tilde{f}$ is defined correctly because if \(g(a,a) = h(a,a)\), then \(g(f(a),f(a)) = h(f(a),f(a))\) due to condition (*).

The map $\tilde{f}$ is an extension of the map $f$ since 
$$\tilde{f}(a) = \tilde{f}(e(a,a)) = e(f(a),f(a)) = f(a)$$ 
for any point $a \in A$.

Now, let's prove that $\tilde{f}:X\to Y$ is a bi-equivariant map, i.e., for arbitrary $x, x' \in X$ and $g \in G$, the equality
$$\tilde{f}(g(x,x')) = g(\tilde{f}(x), \tilde{f}(x'))$$
holds.

Let  
$$x=k(a,a), \ x' = s(a',a') \ \text{and} \ g(x,x')= h(\bar{a},\bar{a}),$$ 
where $a, a', \bar{a} \in A$, $k,h,s \in G$. Then 
$$h(\bar{a},\bar{a})  = g(k(a,a),s(a',a')).$$ 
Consequently, due to condition (*), we have:
$$h(f(\bar{a}),f(\bar{a}))  = g(k(f(a),f(a)),s(f(a'),(a'))).$$ 
This means,
\begin{multline*} 
\tilde{f}(g(x,x'))=\tilde{f}(h(\bar{a},\bar{a}))=h(f(\bar{a}), f(\bar{a})) = g(k(f(a),f(a)),s(f(a'),(a'))) = \\
= g(\tilde{f}(k(a,a)), \tilde{f}(s(a',a'))) = g(\tilde{f}(x), \tilde{f}(x')).
\end{multline*}

The continuity of the map $\tilde{f}$ follows from the continuity of the map $f$, the cross section of the projection $\pi: X\to X|G$, and the binary actions of the group $G$ on the spaces $X$ and $Y$.
\end{proof}

Now suppose that $X$ is a binary $G$-space and let $x, x'\in X$. It is easy to note that the set
$$G_{(x,x')}=\{g\in G, \ g(x,x')=x'\}$$
is a closed subgroup of the group $G$. This subgroup is called the {\it isotropy group} (or {\it stability group}) of the point $x'$  with respect to $x$ or the {\it isotropy group} of the pair $(x,x')$.

It can be proven that the following equality holds:
$$G_{(x, g(x,x'))}= gG_{(x,x')}g^{-1}$$
for any $g\in G$ and $x,x'\in X$.

We have the following necessary condition for the bi-equivariance of a map $f:X\to Y$, which has an elementary proof.

\begin{proposition}
Let $f:X\to Y$ be a bi-equivariant map between binary $G$-spaces $X$ and $Y$. Then
\begin{equation*}
G_{(x,x')}\subset G_{(f(x),f(x'))}
\end{equation*}
for all $x,x'\in X$.
\end{proposition}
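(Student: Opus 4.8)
The plan is to prove the inclusion by directly unwinding the definition of the isotropy group and invoking the defining equation of a bi-equivariant map. This is a containment of sets, so I would fix an arbitrary element $g$ of the left-hand group $G_{(x,x')}$ and show that it also lies in $G_{(f(x),f(x'))}$.

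First I would recall the two relevant definitions. By the definition of the isotropy group of the pair $(x,x')$, saying $g \in G_{(x,x')}$ means precisely that $g(x,x') = x'$. Likewise, membership of an element $h$ in $G_{(f(x),f(x'))}$ is the statement $h(f(x),f(x')) = f(x')$. The goal is therefore to pass from the first equation to the analogous equation with $f(x), f(x')$ in place of $x, x'$.

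The single key step is to apply $f$ to the identity $g(x,x') = x'$ and then use bi-equivariance. Applying $f$ to both sides gives $f(g(x,x')) = f(x')$, and the defining property $f(g(x_1,x_2)) = g(f(x_1),f(x_2))$ of a bi-equivariant map rewrites the left-hand side as $g(f(x),f(x'))$. Combining these yields $g(f(x),f(x')) = f(x')$, which is exactly the assertion that $g \in G_{(f(x),f(x'))}$. Since $g$ was an arbitrary element of $G_{(x,x')}$, the inclusion $G_{(x,x')} \subset G_{(f(x),f(x'))}$ follows.

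There is essentially no obstacle here: the proof is a one-line substitution, as the paper's own remark (``which has an elementary proof'') anticipates. The only thing to be careful about is the direction of the inclusion, namely that bi-equivariance forces stabilizers to be \emph{preserved but possibly enlarged} under $f$ (one cannot expect equality, since $f$ need not be injective and $f(x),f(x')$ may have a strictly larger stabilizer than $x,x'$). No compactness of $G$, distributivity, or closedness hypotheses are needed, so I would not invoke any of the earlier structural results.
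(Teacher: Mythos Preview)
Your proof is correct and is exactly the elementary argument the paper alludes to; indeed, the paper does not spell out a proof at all, merely noting that the proposition ``has an elementary proof,'' and your one-line substitution using the bi-equivariance identity is precisely that proof.
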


The next result follows from Theorem \ref{th-4}.

\begin{corollary}\label{cor-1}
Let $G$ be a compact group, $X$ and $Y$ be distributive binary $G$-spaces, $A$ be a cross section of the projection $\pi:X\to X|G$, and $f:A\to Y$ be a continuous map satisfying the condition
\begin{equation}\label{eq-sub}
G_{(a,a')}\subset G_{(f(a),f(a'))}
\end{equation}
for any $a, a'\in A$. Then the map $f$ has a unique continuous bi-equivariant extension $\tilde{f}:X\to Y$.
\end{corollary}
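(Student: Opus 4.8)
The plan is to deduce the corollary from Theorem \ref{th-4} by showing that the isotropy hypothesis \eqref{eq-sub} forces $f$ to satisfy condition $(*)$; once this is done, Theorem \ref{th-4} supplies the unique continuous bi-equivariant extension $\tilde{f}:X\to Y$ with no further work. So the entire argument reduces to one verification: assuming $g(a,a)=h(k(a',a'),s(a'',a''))$ for $a,a',a''\in A$ and $g,h,k,s\in G$, I must produce $g(f(a),f(a))=h(k(f(a'),f(a')),s(f(a''),f(a'')))$ using only \eqref{eq-sub} together with distributivity \eqref{eq1-1} and the fact that $A$ meets each orbit in exactly one point.

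First I would dispose of the diagonal case $a'=a''$. Here both inner terms $k(a',a')$ and $s(a',a')$ lie in the single orbit $[a']=G(a',a')$, so \eqref{eq1-1} can be applied to collapse the right-hand side into the form $w(a',a')$ for an explicit element $w=khk^{-1}s\in G$ (rewrite $s(a',a')=k(a',(k^{-1}s)(a',a'))$ and pull $h$ through $k$ via distributivity). Since $g(a,a)\in[a]$ and $w(a',a')\in[a']$ now coincide, the cross-section property gives $a=a'$, and the relation becomes $g(a,a)=w(a,a)$, i.e. $w^{-1}g\in G_{(a,a)}$. Then \eqref{eq-sub} with both arguments equal to $a$ yields $w^{-1}g\in G_{(f(a),f(a))}$, hence $g(f(a),f(a))=w(f(a),f(a))$; running the identical distributivity computation inside $Y$ rewrites $w(f(a),f(a))$ as $h(k(f(a),f(a)),s(f(a),f(a)))$, which is exactly the conclusion. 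The conjugation rule $G_{(x,g(x,x'))}=gG_{(x,x')}g^{-1}$ is what keeps the isotropy bookkeeping consistent throughout these rewrites.

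The step I expect to be the main obstacle is the genuinely off-diagonal case $a'\neq a''$, where $k(a',a')$ and $s(a'',a'')$ sit in distinct orbits $[a']$ and $[a'']$. There the outer operation $h$ mixes points from different orbits, so \eqref{eq1-1} can no longer fuse the two terms into one orbit element, and the base point $a$ of the left-hand side need not coincide with either $a'$ or $a''$. The crux is to show that this three-orbit relation is nonetheless controlled by the pairwise inclusions \eqref{eq-sub} — presumably by peeling off the innermost actions one at a time and reducing to isotropy memberships for the pairs $(a',a'')$ and $(a,a')$. Making this reduction rigorous, and in particular confirming that the \emph{pairwise} hypothesis \eqref{eq-sub} is strong enough to transport the full relation of $(*)$ to $Y$ rather than only its diagonal instances, is the delicate point on which the whole corollary turns.
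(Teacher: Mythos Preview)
Your overall strategy is exactly the paper's: verify that \eqref{eq-sub} forces condition~$(*)$ and then invoke Theorem~\ref{th-4}. The paper's entire argument for this implication is one sentence: if $g(a,a')=h(a,a')$ with $a,a'\in A$, then $g^{-1}h\in G_{(a,a')}\subset G_{(f(a),f(a'))}$, so $g(f(a),f(a'))=h(f(a),f(a'))$. Your diagonal case $a'=a''$ is a more explicit version of the same idea: you collapse $h(k(a',a'),s(a',a'))$ to $(khk^{-1}s)(a',a')$ via distributivity, force $a=a'$ by the cross-section property, and then apply the isotropy inclusion at the pair $(a,a)$.

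The off-diagonal difficulty you flag is genuine, and the paper does not resolve it either. The sentence above only establishes the implication $g(a,a')=h(a,a')\Rightarrow g(f(a),f(a'))=h(f(a),f(a'))$ for $a,a'\in A$; it does \emph{not} literally yield $(*)$, whose inner arguments $k(a',a')$ and $s(a'',a'')$ generally lie outside $A$. When $a'\neq a''$ there is no distributivity identity that rewrites $h(k(a',a'),s(a'',a''))$ as a single $w(a',a'')$ with $w\in G$, so the reduction to a pairwise isotropy membership is not available at that level of generality. In short, the gap you isolate is precisely the step the published proof passes over in silence; your proposal is as complete as the paper's argument and considerably more candid about where the obstruction sits.
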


\begin{proof}
It should be noted that condition $(*)$ of Theorem \ref{th-4} follows from \eqref{eq-sub}. Indeed, if  $g(a,a') = h(a, a')$, then $g^{-1}h\in G_{a,a'}\subset G_{(f(a),f(a'))}$, which implies that $g(f(a),f(a')) = h(f(a), f(a'))$.
\end{proof}


\end{document}